\newtheorem{theorem}{Theorem}[section]
\numberwithin{equation}{section}
\newtheorem{definition}[theorem]{Definition}
\newtheorem{remark}[theorem]{Remark}
\newcommand{\Aut}{\ensuremath{\mathrm{Aut}}}
\newcommand{\End}{\ensuremath{\mathrm{End}}}
\newcommand{\Fr}{\ensuremath{\mathrm{Fr}}}
\newcommand{\Frss}{\ensuremath{\mathrm{Fr\text{-}ss}}}
\newcommand{\Gal}{\ensuremath{\mathrm{Gal}}}
\newcommand{\gln}{\ensuremath{\operatorname{GL}}}
\newcommand{\id}{\ensuremath{\mathrm{id}}}
\newcommand{\intal}{\ensuremath{\mathrm{intal}}}
\newcommand{\loc}{\ensuremath{\mathrm{loc}}}
\newcommand{\modu}{\ensuremath{\mathrm{\;mod\;}}}
\newcommand{\ord}{\ensuremath{\mathrm{ord}}}
\newcommand{\Qbar}{\ensuremath{\overline{Q}}}
\newcommand{\res}{\ensuremath{\mathrm{res}}}
\newcommand{\Sp}{\ensuremath{\mathrm{Sp}}}
\newcommand{\Sym}{\ensuremath{\mathrm{Sym}}}
\newcommand{\tr}{\ensuremath{\mathrm{tr}}}
\newcommand{\ur}{\ensuremath{\mathrm{ur}}}
\newcommand{\WD}{\ensuremath{\mathrm{WD}}}
\newcommand{\ie}{{\it i.e.}}
\newcommand{\loccit}{{\it loc.\,cit}}
\newcommand{\hra}{\hookrightarrow}
\newcommand{\xra}{\xrightarrow}
\newcommand{\bbQ}{\ensuremath{\mathbb{Q}}}
\newcommand{\bbQbar}{\ensuremath{\overline\bbQ}}
\newcommand{\bbQp}{\ensuremath{{\mathbb{Q}_p}}}
\newcommand{\bbQpbar}{{\ensuremath{\overline{\mathbb{Q}}_p}}}
\newcommand{\bbQl}{\ensuremath{\mathbb{Q}_\ell}}
\newcommand{\bbS}{\ensuremath{\mathbb{S}}}
\newcommand{\bbZ}{\ensuremath{\mathbb{Z}}}
\newcommand{\bbZp}{\ensuremath{{\mathbb{Z}_p}}}
\newcommand{\calO}{\ensuremath{\mathcal{O}}}
\newcommand{\calQ}{\ensuremath{\mathcal{Q}}}
\newcommand{\calQbar}{\ensuremath{{\overline{\mathcal{ Q}} }  }}
\newcommand{\calT}{\ensuremath{\mathcal{T}}}
\newcommand{\fraka}{\ensuremath{\mathfrak{a}}}
\newcommand{\frakm}{\ensuremath{\mathfrak{m}}}
\newcommand{\frakp}{\ensuremath{\mathfrak{p}}}
\newcommand{\frakS}{\ensuremath{\mathfrak{S}}}
\newcommand{\scrO}{\ensuremath{\mathscr{O}}}
\begin{document}
\title{Variation of Weyl modules in $p$-adic families}
\author{Jyoti Prakash Saha}
\address{Max Planck Institute for Mathematics, Vivatsgasse 7, 53111 Bonn, Germany}
\email{saha@mpim-bonn.mpg.de}
\subjclass[2010]{11F80}
\keywords{$p$-adic families of automorphic forms, Pure representations, Weyl modules, Symmetric powers}

\begin{abstract}
Given a Weil-Deligne representation with coefficients in a domain, we prove the rigidity of the structures of the Frobenius-semisimplifications of the Weyl modules associated to its pure specializations. Moreover, we show that the structures of the Frobenius-semisimplifications of the Weyl modules attached to a collection of pure representations are rigid if these pure representations lift to Weil-Deligne representations over domains containing a domain $\mathscr{O}$ and a pseudorepresentation over $\mathscr{O}$ parametrizes the traces of these lifts.
\end{abstract}
\maketitle

\section{Introduction}
The aim of this article is to study the variation of the Weyl modules in families of automorphic Galois representations. We show that the variation of the Weyl modules is related to the purity of $p$-adic automorphic Galois representations at the places outside $p$ and establish the rigidity of the structures of the Weyl modules at the arithmetic points of irreducible components of $p$-adic families. 

Let $p,\ell$ be two distinct primes, $K$ be a finite extension of $\bbQl$ and $\scrO$ be an integral domain containing $\bbQ$. Let $\bbS_\mu$ denote the Schur functor corresponding to a partition $\mu$ of a positive integer $d$. In theorem \ref{Thm: Weyl big}, we show that the structures of the Frobenius-semisimplifications of the Weyl modules associated to (\ie, $\bbS_\mu$ applied to) a collection of pure representations (\ie, representations whose associated monodromy filtrations and weight filtrations coincide up to some shift) of the Weil group $W_K$ over $\bbQpbar$ are ``rigid'' if these pure representations are specializations of a Weil-Deligne representation over $\scrO$. More generally, we prove that the structures of the Frobenius-semisimplifications of the Weyl modules attached to a collection of pure representations of $W_K$ over $\bbQpbar$ are ``rigid'' if these representations lift to Weil-Deligne representations over domains containing $\scrO$ and a pseudorepresentation $T:W_K \to \scrO$ parametrizes the traces of these lifts (see theorem \ref{Thm: Weyl pseudorepresentation}). 

The eigenvarieties provide examples of $p$-adic families of automorphic Galois representations. If $T$ denotes the pseudorepresentation associated to an eigenvariety $X$, then for any nonempty admissible open affinoid subset $U$ of $X$, the restriction of $T$ to $\calO(U)$ lifts to a Galois representation on a finite type module over some integral extension of the normalization of $\calO(U)$ (by \cite[Lemma 7.8.11]{BellaicheChenevierAsterisQUE}). However, this module is not known to be free. So theorem \ref{Thm: Weyl pseudorepresentation} cannot be applied to eigenvarieties to study the Weyl modules associated to all arithmetic points. To circumvent this problem, we prove theorem \ref{Thm: Weyl sum} which applies to the Weyl modules associated to the arithmetic points whose associated Galois representations are absolutely irreducible and give pure representations when restricted to decomposition groups at the places outside $p$. Theorem \ref{Thm: Weyl big}, \ref{Thm: Weyl pseudorepresentation}, \ref{Thm: Weyl sum} are generalizations of \cite[Theorem 4.1, 5.4, 5.6]{BigPuritySub} and hence apply to $p$-adic families of automorphic representations (in the same way as \cite[Theorem 4.1, 5.4, 5.6]{BigPuritySub} are applicable, see section 6 of \loccit.) since automorphic Galois representations are known to be pure in many cases. We illustrate such an application in theorem \ref{Thm: Application: Hida family}, which explains the variation of the symmetric powers of the Galois representations attached to the arithmetic points of the Hida family of ordinary cusp forms.

\section{Preliminaries}
For every field $F$, fix an algebraic closure $\overline F$ of it and let $G_F$ denote the absolute Galois group $\Gal(\overline F/F)$. Given a domain $A$, let $Q(A)$ denote its fraction field, $\overline Q(A)$ denote the field $\overline{Q(A)}$, $A^\intal$ denote the integral closure of $A$ in $\Qbar (A)$. For every map $f:A\to B$ between domains, fix an extension $f^\intal :A^\intal \to B^\intal$ of $f$.

Let $\mu$ be a partition of a positive integer $d$. Let $\frakS_d$ denote the group of all permutations of the set $\{1, \cdots, d\}$ and $c_\mu$ denote the Young symmetrizer in $\bbZ[ \frakS_d]$ attached to $\mu$ (see \cite[\S 4.1]{FultonHarrisRepresentationTheory} for more details). Then there exists a positive integer $n_\mu$ such that $c_\mu^2 = n_\mu c_\mu$ in $\bbZ[\frakS_d]$ (see \cite[Lemma 4.26]{FultonHarrisRepresentationTheory}). For any module $M$ over a commutative ring $R$, the $R$-module $M^{\otimes d}$ carries a left $R$-linear action of $\Aut_R (M)$ and a right action of $\frakS_d$ given by 
$(m_1\otimes \cdots \otimes m_d)\cdot \sigma = m_{\sigma (1)} \otimes \cdots \otimes m_{\sigma(d)}$. 
This right action commutes with the left action of $\Aut_R(M)$. If $n_\mu$ is invertible in $R$, then denote the image of $M^{\otimes d}$ under $c_\mu$ by $\bbS_\mu M$, which is again an $R$-module and carries an $R$-linear left action of $\Aut_R(M)$. We call the functor $M \rightsquigarrow \bbS_\mu M$ the {\it Schur functor} or {\it Weyl module} corresponding to $\mu$. If $V$ is a vector space over a field $F$ of characteristic zero, then the dimension of $\bbS_\mu V$ is equal to $\dim_\bbQ \bbS_\mu (\bbQ^{\dim_FV})$. We denote this integer by $d(\mu, \dim_FV)$. Moreover, if $M$ carries an $R$-linear action of a group $G$ and $n_\mu$ is invertible in $R$, then $\bbS_\mu M$ also inherits an action of $G$, \ie, given a representation $\rho:G\to \Aut_R(M)$, the module $\bbS_\mu M$ can be considered as an $R$-linear representation of $G$ via the composite map $G\xra{\rho} \Aut_R(M) \to \Aut_R( \bbS_\mu M)$. We denote by $\bbS_\mu \rho$ the $R$-module $\bbS_\mu M$ together with the $R$-linear action of $G$ given by this composite map. 

\begin{remark}
If $\mu$ is equal to the partition $d=d$ (resp. $d=1+\cdots+1$), then for any $\bbZ[1/n_\mu]$-module $M$, the module $\bbS_\mu M$ is equal to $\Sym^d M$ (resp. $\wedge^d M$). For example, if $\rho:G \to \gln_n(\bbQpbar)$ is a representation, then $\Sym^d \rho$ denotes the $\bbQpbar$-vector space $\Sym^d (\bbQbar_p^n)$ together with the $\bbQpbar$-linear action of $G$ given by the composite map 
$$G\xra{\rho} \gln_n(\bbQpbar) =\Aut_{\bbQpbar}(\bbQbar^n_p)\to \Aut_{\bbQpbar} (\Sym^d (\bbQbar_p^n)).$$
\end{remark}

Let $k$ denote the residue field of the ring of integers $\calO_K$ of $K$ and $\phi$ denote an element of $G_K$ which maps to the geometric Frobenius element $\Fr_k\in G_k$. The Weil group $W_K$ is defined as the subgroup of $G_K$ consisting of elements which map to integral powers of $\Fr_k$ in $G_k$. Define $v_K:W_K\to \bbZ$ by $\sigma|_{K^\ur}=\Fr_k^{v_K(\sigma)}$ for all $\sigma\in W_K$. In the following, $A$ denotes a commutative integral domain of characteristic zero. 

\begin{definition}[{\cite[8.4.1]{DeligneConstantesDesEquationsFunctional}}]
A \textnormal{Weil-Deligne representation} of $W_K$ on a free module $M$ of finite type over $A$ is a pair $(r,N)$ consisting of a representation $r:W_K\to \Aut_A (M)$ with open kernel and a nilpotent element $N$ of $\End_A(M)$ such that 
$$r(\sigma)N r(\sigma)^{-1}=(\# k)^{-v_K(\sigma)}N$$
for any $\sigma\in W_K$. A representation $\rho$ of $W_K$ on $M$ is said to be \textnormal{irreducible Frobenius-semisimple} if $\rho$ is has open kernel, $M\otimes \Qbar(A)$ is irreducible and the $\phi$-action on $M\otimes \Qbar(A)$ is semisimple. 
\end{definition}

Given a Weil-Deligne representation $(r, N)$ of $W_K$ on a vector space $V$ with coefficients in an algebraically closed field of characteristic zero, we denote its Frobenius-semisimplification by $(r, N)^\Frss$ (see \cite[p. 570]{DeligneConstantesDesEquationsFunctional}). 

\begin{definition}
Given Weil-Deligne representations $(r_1, N_1)$ on an $A$-module $M_1$ and $(r_2, N_2)$ on an $A$-module $M_2$, their \textnormal{tensor product} is defined as the Weil-Deligne representation $(r_1\otimes r_2, \id_{M_1}\otimes N_2+N_1\otimes \id_{M_2})$ on $M_1\otimes_A M_2$. 
\end{definition}

\begin{definition}
Let $(r, N)$ be a Weil-Deligne representation of $W_K$ on a free module $M$ over a domain $R$ containing $\bbQ$. If $\bbS_\mu M$ is free over $R$, then the Weil-Deligne representation $\bbS_\mu (r, N)$ is defined as $(r, N)^{\otimes d} |_{\bbS_\mu M}$. 
\end{definition}

\section{Control theorems for Weyl modules}
The following results are the analogues of \cite[Theorem 4.1, 5.4, 5.6]{BigPuritySub} in the context of Weyl modules. 
\begin{theorem}
\label{Thm: Weyl big}
Let $(r, N): W_K \to \gln_n(\scrO)$ be a Weil-Deligne representation. Let $m, t_1, \cdots ,t_m$ be positive integers, $r_1, \cdots, r_m$ be irreducible Frobenius-semisimple representations of $W_K$ over $\scrO^\intal$ such that 
\begin{equation}
\label{Eqn: Isom Weyl big 1}
\left(\bbS_\mu \left((r, N) \otimes_\scrO \Qbar (\scrO) \right) \right)^\Frss \simeq 
\bigoplus_{i=1}^m \Sp_{t_i} (r_i).
\end{equation}
If $f \circ (r, N)$ is pure for some map $f: \scrO \to \bbQpbar$, then the Weil-Deligne representations $(\bbS_\mu (f \circ (r, N))) ^\Frss$ and $\oplus_{i=1}^m \Sp_{t_i} (f^\dag \circ r_i)$ are isomorphic for any lift $f^\dag: \scrO^\intal \to \bbQpbar$ of $f$. Moreover, there exist $m, t_i, r_i$ with the above-mentioned properties such that equation \eqref{Eqn: Isom Weyl big 1} holds.
\end{theorem}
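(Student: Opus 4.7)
The plan is to reduce to \cite[Theorem 4.1]{BigPuritySub} applied to the Weil-Deligne representation $\bbS_\mu(r, N)$ in place of $(r, N)$, after verifying two preparatory facts.

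First, I would check that $\bbS_\mu(r, N)$ is itself a Weil-Deligne representation of $W_K$ over $\scrO$ on a free module of rank $d(\mu, n)$. Since $\scrO$ contains $\bbQ$, the integer $n_\mu$ is invertible in $\scrO$, so the Young symmetrizer $c_\mu/n_\mu$ defines a well-defined projector on $(\scrO^n)^{\otimes d}$ whose image is free of the stated rank. The representation $r^{\otimes d}$ and the canonical nilpotent endomorphism $N_d := \sum_{i=1}^d \id^{\otimes(i-1)} \otimes N \otimes \id^{\otimes(d-i)}$ both commute with the right $\frakS_d$-action, so they restrict to the Schur summand and yield the Weil-Deligne structure on $\bbS_\mu(r, N)$; note moreover that $N_d$ remains nilpotent and the relation $r(\sigma) N_d r(\sigma)^{-1} = (\#k)^{-v_K(\sigma)} N_d$ is inherited from $(r,N)$.

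Second, I would establish the purity-preservation principle: if $(r', N')$ is a pure Weil-Deligne representation over $\bbQpbar$, then $\bbS_\mu(r', N')$ is pure. This rests on the classical fact that tensor products of pure Weil-Deligne representations are pure (the convolutions of the monodromy filtrations and of the weight filtrations agree with the monodromy and weight filtrations of the tensor product, up to the usual shift of weight), combined with the observation that $\bbS_\mu(r', N')$ is a $W_K$-stable direct summand of $(r', N')^{\otimes d}$ via the idempotent $c_\mu/n_\mu$. Direct summands of pure Weil-Deligne representations are pure, because the induced filtrations on a summand are precisely its monodromy and weight filtrations.

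Granted these two ingredients, both conclusions of the theorem follow at once from \cite[Theorem 4.1]{BigPuritySub} applied to $\bbS_\mu(r, N)$. The moreover clause is the existence statement of \loccit{} applied to $\bbS_\mu(r, N)$, which automatically supplies $r_i$ realized over $\scrO^\intal$ and the decomposition \eqref{Eqn: Isom Weyl big 1}. For the main implication, one observes that $f \circ \bbS_\mu(r, N) = \bbS_\mu(f \circ (r, N))$; the preservation principle then upgrades the purity of $f \circ (r, N)$ to purity of this specialization, and \loccit{} yields the isomorphism $(\bbS_\mu(f \circ (r, N)))^\Frss \simeq \bigoplus_{i=1}^m \Sp_{t_i}(f^\dag \circ r_i)$ for any lift $f^\dag$. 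The main obstacle is the purity-preservation principle: while the tensor-product case is standard, verifying that both the monodromy and weight filtrations restrict compatibly to the Schur summand under the projector $c_\mu/n_\mu$ is the step specific to this Weyl-module generalization and requires a careful functoriality argument applied to these filtrations.
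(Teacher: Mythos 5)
Your proposal is correct and follows essentially the same route as the paper: realize $\bbS_\mu(r,N)$ as a Weil--Deligne direct summand of $(r,N)^{\otimes d}$ cut out by the idempotent $c_\mu/n_\mu$, observe that this is compatible with the specialization $f$, deduce purity of $\bbS_\mu(f\circ(r,N))$ from purity of the tensor power (Deligne, Weil II, 1.6.9) plus the direct-summand principle, and invoke \cite[Theorem 4.1]{BigPuritySub}. The only cosmetic difference is that the paper secures freeness of the Schur summand by localizing at $\ker f$ and citing \cite[Proposition 3.G]{MatsumuraCommutativeAlgebra}, whereas you assert freeness over $\scrO$ directly; both are fine.
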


\begin{proof}
Let $\calT$ denote the $\scrO$-module $\scrO^n$ and $\frakp$ denote the kernel of $f$. Note that $\calT_\frakp^{\otimes d}$ decomposes as the direct sum $\bbS_\mu (\calT_\frakp) \oplus \calT_\frakp^{\otimes d} (n_\mu-c_\mu)$ (as $\End_\scrO(\calT)$-modules). Moreover these summands are free over $\scrO_\frakp$ by \cite[Proposition 3.G]{MatsumuraCommutativeAlgebra}. So $\bbS_\mu (\calT _\frakp) \otimes_{\scrO_\frakp, f} \bbQpbar$ is isomorphic to its image in $(\calT^{\otimes d})_\frakp \otimes_{\scrO_\frakp, f} \bbQpbar$, \ie, to $\bbS_\mu (f \circ (r, N))$ as Weil-Deligne representations. Since $(f \circ (r, N))^{\otimes d}$ is pure (by \cite[Proposition 1.6.9]{DeligneWeil2}), the representation $\bbS_\mu (f \circ (r, N))$ is also pure. So the result follows from \cite[Theorem 4.1]{BigPuritySub}. 
\end{proof}

\begin{theorem}
\label{Thm: Weyl pseudorepresentation} 
Let $T: W_K \to \scrO$ be a pseudorepresentation. Let $(r, N):W_K \to \gln_n(\calO)$ be a Weil-Deligne representation over a domain $\calO$ such that $\res\circ T = \tr r$ for an injective map $\res:\scrO \hra\calO$. Suppose $m, t_1, \cdots, t_m$ are positive integers, $r_1, \cdots, r_m$ are irreducible Frobenius-semisimple representations of $W_K$ over $\scrO^\intal$ such that 
\begin{equation}
\label{Eqn: Isom 123}
\left( \bbS_\mu\left((r, N)\otimes_{\calO}\overline Q(\calO) \right) \right)^\Frss
\simeq 
\bigoplus_{i=1}^m \Sp_{t_i} (\res^\intal \circ r_i).
\end{equation}
Suppose $f \circ (r, N)$ is pure for some map $f: \calO \to \bbQpbar$. Then for any Weil-Deligne representation $(r', N'): W_K\to \gln_n(\calO')$ over a domain $\calO'$ such that $\tr r'$ is equal to $\res' \circ T$ for some injective map $\res':\scrO \hra \calO'$ and $f'\circ (r', N')$ is pure for some map $f': \calO' \to \bbQpbar$, 
the Weil-Deligne representations $\left( \bbS_\mu\left((r', N')\otimes_{\calO'}\overline Q(\calO') \right) \right)^\Frss, \left( \bbS_\mu\left(f'\circ (r', N')\right) \right)^\Frss$ are isomorphic to $\oplus_{i=1}^m \Sp_{t_i} (\res'^\dag \circ r_i), \oplus_{i=1}^m \Sp_{t_i} (f'^\dag \circ \res'^\dag \circ r_i)$ respectively for any lift $\res'^\dag:\scrO^\intal \to \calO'^\intal$ of $\res'$ and $f'^\dag:\calO'^\intal \to \bbQpbar$ of $f'$.
Furthermore, there exist $m, t_i, r_i$ with the above-mentioned properties such that equation \eqref{Eqn: Isom 123} holds. 
\end{theorem}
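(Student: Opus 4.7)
My plan is to adapt the strategy of the proof of Theorem \ref{Thm: Weyl big}: realise $\bbS_\mu$ as a direct summand of $\otimes^d$ via the Young symmetrizer $c_\mu$, construct a pseudorepresentation $\bbS_\mu T:W_K\to\scrO$ that parametrises the traces of both $\bbS_\mu(r,N)$ and $\bbS_\mu(r',N')$, and then reduce the theorem to \cite[Theorem 5.4]{BigPuritySub}.

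To set up the application, I would localise $\calO$ at $\frakp:=\ker f$ and $\calO'$ at $\frakp':=\ker f'$, and set $\calT:=\calO^n$, $\calT':=(\calO')^n$. Exactly as in the proof of Theorem \ref{Thm: Weyl big}, the decomposition $\calT_\frakp^{\otimes d}=\bbS_\mu\calT_\frakp\oplus\calT_\frakp^{\otimes d}(n_\mu-c_\mu)$ as $\End_{\calO_\frakp}(\calT_\frakp)$-modules, together with \cite[Proposition 3.G]{MatsumuraCommutativeAlgebra}, shows that $\bbS_\mu\calT_\frakp$ is free over $\calO_\frakp$, so $\bbS_\mu((r,N)\otimes_\calO\calO_\frakp)$ is a genuine Weil-Deligne representation over $\calO_\frakp$, and the analogous statement holds for $(r',N')$ over $\calO'_{\frakp'}$. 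Since $(f\circ(r,N))^{\otimes d}$ and $(f'\circ(r',N'))^{\otimes d}$ are pure by \cite[Proposition 1.6.9]{DeligneWeil2}, their Weyl-module direct summands $\bbS_\mu(f\circ(r,N))$ and $\bbS_\mu(f'\circ(r',N'))$ are pure as well.

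Next, I would construct $\bbS_\mu T$. For any $n$-dimensional representation $\rho$ of $W_K$ over a $\bbQ$-algebra, Newton's identities applied to the Schur polynomial for $\mu$ express $\tr((\bbS_\mu\rho)(g))$ as a polynomial with $\bbQ$-coefficients in the power sums $g\mapsto\tr(\rho(g^k))$ for $1\le k\le d$. Applying that polynomial to the $\scrO$-valued functions $g\mapsto T(g^k)$ produces a function $\bbS_\mu T:W_K\to\scrO$ which by construction satisfies $\tr(\bbS_\mu r)=\res\circ\bbS_\mu T$ and $\tr(\bbS_\mu r')=\res'\circ\bbS_\mu T$. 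Universality of Schur characters in characteristic zero then ensures $\bbS_\mu T$ is a pseudorepresentation of dimension $d(\mu,n)$.

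Finally, I would invoke \cite[Theorem 5.4]{BigPuritySub} with inputs the pseudorepresentation $\bbS_\mu T$, the Weil-Deligne representations $\bbS_\mu((r,N)\otimes_\calO\calO_\frakp)$ and $\bbS_\mu((r',N')\otimes_{\calO'}\calO'_{\frakp'})$, the decomposition \eqref{Eqn: Isom 123} as input, and the purity statements recorded in the first step; the conclusion yields both claimed isomorphisms and the existence (\textit{furthermore}) clause. The principal obstacle is the pseudorepresentation step: one must verify that $\bbS_\mu T$ satisfies the precise identities demanded by \cite[Theorem 5.4]{BigPuritySub} (the Cayley--Hamilton-type identities at dimension $d(\mu,n)$), not merely that it is a well-defined function. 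The standard route is to check these universal identities after base change to $\Qbar(\scrO)$, where every pseudorepresentation comes from a genuine semisimple representation $\rho$ and $\bbS_\mu T$ becomes the honest trace of $\bbS_\mu\rho$; the injectivity of $\scrO\hra\Qbar(\scrO)$ then transfers the identities back to $\scrO$.
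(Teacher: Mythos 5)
Your proposal is correct and follows essentially the same route as the paper: localize at $\ker f$ and $\ker f'$ to realize $\bbS_\mu$ of each Weil--Deligne representation as a free direct summand of the $d$-th tensor power (hence pure, by \cite[Proposition 1.6.9]{DeligneWeil2}), produce a single pseudorepresentation whose images under $\res$ and $\res'$ are the traces of $\bbS_\mu r$ and $\bbS_\mu r'$, and conclude by \cite[Theorem 5.4]{BigPuritySub}. The only cosmetic difference is that the paper defines this pseudorepresentation as $\tr \bbS_\mu\rho$ for a semisimple lift $\rho$ of $T$ over $\Qbar(\scrO)$ and cites \cite[Theorem 6.3.(3)]{FultonHarrisRepresentationTheory} for the trace identities, whereas you write the same function as a universal polynomial in the power sums of $T$ and then use the lift $\rho$ only to verify the pseudorepresentation axioms.
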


\begin{proof}
Let $\rho$ be a representation of $W_K$ over $\Qbar(\scrO)$ such that its trace is equal to $T$. Let $\frakp, \frakp'$ denote the kernel of $f, f'$ respectively. Note that $\bbS_\mu ((r, N)\otimes_\calO \calO_\frakp)$ is a Weil-Deligne representation on $\calO_\frakp^{d(\mu, n)}$ and $\bbS_\mu (f \circ (r, N))= f \circ (\bbS_\mu ((r, N)\otimes_\calO \calO_\frakp))$ is pure by \cite[Proposition 1.6.9]{DeligneWeil2}. Since $\res ^\intal \circ \rho$ and $r$ have equal traces, the representations $\res^\intal \circ \bbS_\mu \rho, \bbS_\mu r$ also have equal traces (by \cite[Theorem 6.3.(3)]{FultonHarrisRepresentationTheory}, for instance) and hence $\res^\intal \circ \tr \bbS_\mu \rho$ is equal to the trace of $\bbS_\mu r$. Similarly, $\bbS_\mu ((r', N')\otimes_{\calO'} \calO'_{\frakp'})$ is a Weil-Deligne representation on ${\calO'} _{\frakp'}^{d(\mu, n)}$, the representation $\bbS_\mu (f'\circ (r', N')) = f' \circ (\bbS_\mu ((r', N')\otimes_{\calO'} \calO'_{\frakp'}))$ is pure and $\res'^\intal \circ \tr \bbS_\mu \rho$ is equal to the trace of $\bbS_\mu r'$. Also note that $\tr \bbS_\mu \rho$ is an $\scrO^\intal$-valued pseudorepresentation of $W_K$. Thus the result follows from \cite[Theorem 5.4]{BigPuritySub}. 
\end{proof}

Suppose $\scrO$ is a $\bbZp$-algebra and let $w\nmid p$ denote a finite place of a number field $F$. Let $T, T_1, \cdots, T_n$ be $\scrO$-valued pseudorepresentations of $G_F$ such that $T= T_1 + \cdots + T_n$. Using \cite[Theorem 1]{TaylorGaloisReprAssociatedToSiegelModForms}, we choose semisimple representations $\sigma_1, \cdots, \sigma_n$ of $G_F$ over $\Qbar(\scrO)$ such that $\tr \sigma_i = T_i$ for all $1\leq i \leq n$. The {\it irreducibility and purity locus} of $T_1, \cdots, T_n$ is defined to be the collection of tuples $(\calO,  \frakm, \kappa,  \loc,\rho_1$, $\cdots, \rho_n)$ where $\calO$ is a Henselian Hausdorff domain and it is a $\bbZp$-algebra, $\frakm$ denotes its maximal ideal, $\kappa$ denotes its residue field which is an algebraic extension of $\bbQp$, $\loc:\scrO \hra\calO$ is an injective $\bbZp$-algebra map, $\rho_1, \cdots, \rho_n$ are irreducible $G_F$-representations over $\overline \kappa$ such that their traces are equal to $\loc \circ T_1\modu \frakm, \cdots, \loc \circ T_n\modu \frakm$ respectively and their restrictions to the decomposition group of $F$ at $w$ are pure. Given such a tuple $(\calO, \frakm, \kappa, \loc, \rho_1, \cdots, \rho_n)$, we use \cite[Th\'eor\`eme 1]{NyssenPseudoRepresentations} to choose semisimple $G_F$-representations $\widetilde \rho_1$, $\cdots$, $\widetilde \rho_n$ over $\calO$ such that $\tr \widetilde \rho_i= \loc \circ T_i$ for all $1\leq i\leq n$. 

\begin{theorem}
\label{Thm: Weyl sum}
Assume that the restrictions of $\sigma_1, \cdots, \sigma_n$ to the Weil group $W_w$ of $F$ at $w$ are potentially unipotent. Then there exist positive integers $m, t_1,  \cdots , t_m$, irreducible Frobenius-semisimple representations $r_1, \cdots, r_m$ of $W_w$ over $\scrO^\intal$ such that 
\begin{equation}
\label{Eqn: purity sum sigma}
\WD \left(\bigoplus_{i=1}^n \bbS_\mu \sigma_i |_{W_w} \right)^\Frss
\simeq 
\bigoplus_{i=1}^m \Sp_{t_i} (r_i)_{/\Qbar(\scrO)}
\end{equation}
and there are isomorphisms of Weil-Deligne representations 
\begin{align}
\WD \left(\bigoplus_{i=1}^n \bbS_\mu \rho_i |_{W_w} \right)^\Frss
& \simeq 
\bigoplus_{i=1}^m \Sp_{t_i} (\pi_{\frakm}^\intal\circ \loc^\intal \circ r_i)_{/\overline \kappa}, \label{Eqn: purity sum rho prime} \\
\WD \left(\bigoplus_{i=1}^n \bbS_\mu \widetilde \rho_i |_{W_w} \otimes \Qbar(\calO)\right)^\Frss
&\simeq 
\bigoplus_{i=1}^m \Sp_{t_i} ( \loc^\intal \circ r_i) _{/\Qbar(\calO)}\label{Eqn: purity sum tilde rho}
\end{align}
for any element $(\calO, \frakm, \kappa, \loc, \rho_1, \cdots, \rho_n)$ in the irreducibility and purity locus of $T_1$, $\cdots$, $T_n$ where $\pi_\frakm$ denotes the $\mathrm{mod}\, \frakm$ reduction map $\calO \to \calO/\frakm$. 
\end{theorem}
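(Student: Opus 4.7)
The plan is to reduce Theorem \ref{Thm: Weyl sum} to \cite[Theorem 5.6]{BigPuritySub} applied to the pseudorepresentations $T'_i := \tr \bbS_\mu \sigma_i$, following the template of the proof of Theorem \ref{Thm: Weyl pseudorepresentation}. Since $\tr \bbS_\mu V$ is a polynomial with $\bbQ$-coefficients in the traces of powers of $V$ (cf.\ \cite[Theorem 6.3.(3)]{FultonHarrisRepresentationTheory}), each $T'_i$ is an $\scrO$-valued pseudorepresentation of $G_F$, and $\bbS_\mu \sigma_i$ is a $G_F$-representation over $\Qbar(\scrO)$ realizing its trace.

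First I would verify that $\bbS_\mu \sigma_i|_{W_w}$ is potentially unipotent: this is immediate from the potential unipotence of $\sigma_i|_{W_w}$, since $\bbS_\mu \sigma_i$ is a direct summand of $\sigma_i^{\otimes d}$ and potential unipotence is inherited under tensor products and passage to direct summands. Thus $\WD(\bbS_\mu \sigma_i|_{W_w})$ is a well-defined Weil--Deligne representation over $\Qbar(\scrO)$, and its Frobenius-semisimplification decomposes into $\Sp$-summands; the substantive content of \eqref{Eqn: purity sum sigma} is that the irreducible constituents $r_i$ descend to $\scrO^\intal$.

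Next, for each tuple $(\calO, \frakm, \kappa, \loc, \rho_1, \ldots, \rho_n)$ in the irreducibility and purity locus of $T_1, \ldots, T_n$, the $G_F$-representations $\bbS_\mu \widetilde\rho_i$ over $\calO$ have trace $\loc \circ T'_i$; their mod-$\frakm$ semisimplifications are isomorphic to $\bbS_\mu \rho_i$ by Brauer--Nesbitt applied to the traces, and the restrictions $\bbS_\mu \rho_i|_{W_w}$ are pure by \cite[Proposition 1.6.9]{DeligneWeil2} applied to the pure $\rho_i|_{W_w}$. Thus each locus tuple for $(T_i)$ yields data witnessing a pure specialization of $(T'_i)$.

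The principal technical obstacle is that $\bbS_\mu \rho_i$ is typically reducible, so the data just constructed do not literally populate the ``irreducibility and purity locus'' of $T'_1, \ldots, T'_n$. I would handle this by decomposing each $\bbS_\mu \rho_i = \oplus_j \rho_{i,j}$ into irreducible constituents over $\overline\kappa$ (each $\rho_{i,j}|_{W_w}$ pure as a summand of a pure representation), lifting each $\rho_{i,j}$ to a semisimple $\widetilde\rho_{i,j}$ over $\calO$ via \cite[Th\'eor\`eme 1]{NyssenPseudoRepresentations}, and working with the refined family of pseudorepresentations $\tr \widetilde\rho_{i,j}$ (valued in $\scrO^\intal$ after an eventual base extension, so that the splitting $\loc \circ T'_i = \sum_j \tr\widetilde\rho_{i,j}$ descends). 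Applying \cite[Theorem 5.6]{BigPuritySub} to the refined collection and regrouping the resulting $\Sp$-summands over $j$ yields the $\scrO^\intal$-rational decompositions \eqref{Eqn: purity sum sigma}, \eqref{Eqn: purity sum rho prime}, and \eqref{Eqn: purity sum tilde rho}.
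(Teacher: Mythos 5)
Your overall strategy diverges from the paper's: the paper does \emph{not} reduce to \cite[Theorem 5.6]{BigPuritySub} applied to the pseudorepresentations $\tr \bbS_\mu\sigma_i$. Instead it notes that each $\widetilde\rho_i$ is irreducible (its residual trace being that of the irreducible $\rho_i$), hence isomorphic to $\sigma_i\otimes\Qbar(\calO)$, so that $\widetilde\rho_i|_{W_w}$ is potentially unipotent and $\WD(\widetilde\rho_i|_{W_w})$ is a Weil--Deligne representation over $\calO$ with trace $\loc\circ T_i|_{W_w}$ and pure reduction $\WD(\rho_i|_{W_w})$; it then applies the already-proved Theorem \ref{Thm: Weyl pseudorepresentation} to each $\WD(\widetilde\rho_i|_{W_w})$ and sums over $i$. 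That route never needs any irreducibility statement about the Weyl modules themselves, because Theorem \ref{Thm: Weyl pseudorepresentation} takes the Schur functor of an honest free-module lift whose trace is controlled by the pseudorepresentation $T_i$, and only requires irreducible constituents $r_i$ on the generic fibre.

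You correctly identify that the locus of $T'_i:=\tr\bbS_\mu\sigma_i$ is typically empty because $\bbS_\mu\rho_i$ is reducible, but your proposed repair has a genuine gap. The refined pseudorepresentations $\tr\widetilde\rho_{i,j}$ are manufactured from the irreducible constituents of $\bbS_\mu\rho_i$ over $\overline\kappa$ \emph{for one chosen tuple} $(\calO,\frakm,\kappa,\loc,\rho_1,\dots,\rho_n)$. For \cite[Theorem 5.6]{BigPuritySub} to apply you need a fixed family of pseudorepresentations over a fixed base, together with the assertion that \emph{every} tuple of the original locus produces a tuple in the refined locus, i.e.\ that at every other tuple $(\calO',\frakm',\dots)$ the residual Weyl module $\bbS_\mu\rho'_i$ decomposes into irreducibles whose traces are exactly the reductions of your chosen $\tr\widetilde\rho_{i,j}$. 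Nothing in your argument establishes this compatibility of global decomposition patterns across the locus --- reductions of irreducible representations need not stay irreducible, and the number and traces of constituents of $\bbS_\mu\rho'_i$ could a priori vary with the tuple. Asserting that the splitting ``descends'' to $\scrO^\intal$ after base extension is precisely the kind of rigidity statement the theorem is trying to prove, so the argument is circular at this point. The paper's route through Theorem \ref{Thm: Weyl pseudorepresentation} sidesteps the issue entirely. (You also omit the degenerate case where the irreducibility and purity locus is empty, for which the paper still has to prove \eqref{Eqn: purity sum sigma} via \cite[Proposition 3.1.3 (i)]{DeligneFormesModulairesGL2}, but that is a minor omission by comparison.)
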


\begin{proof}
If the irreducibility and purity locus of $T_1, \cdots, T_n$ is empty, then it remains to prove equation \eqref{Eqn: purity sum sigma}, which follows from \cite[Proposition 3.1.3 (i)]{DeligneFormesModulairesGL2}. So we assume that this locus is nonempty. Note that the representation $\widetilde \rho_i$ is irreducible since the mod $\frakm$ reduction of its trace is equal to the trace of the irreducible representation $\rho_i$. Since the $G_F$-representations $\sigma_i\otimes \Qbar(\calO), \widetilde \rho_i$ have equal traces, they are isomorphic. Since $\sigma_i|_{W_w}$ is potentially unipotent, the representation $\widetilde \rho_i|_{W_w}$ is also potentially unipotent. Consequently, its Weil-Deligne parametrization $\WD(\widetilde \rho_i|_{W_w})$ is defined and has coefficients in $\calO$. Moreover, the trace of $\WD(\widetilde \rho_i|_{W_w})$ is equal to $\loc \circ T_i|_{W_w} = \loc \circ \tr \sigma_i$ and the mod $\frakm$ reduction of $\WD(\widetilde \rho_i|_{W_w})$ is isomorphic to the pure representation $\WD(\rho_i|_{W_w})$. So the isomorphisms in equation \eqref{Eqn: purity sum rho prime} and \eqref{Eqn: purity sum tilde rho} follow from theorem \ref{Thm: Weyl pseudorepresentation}. Then equation \eqref{Eqn: purity sum tilde rho} gives equation \eqref{Eqn: purity sum sigma} since $\sigma_i\otimes \Qbar(\calO)$ is isomorphic to $\widetilde \rho_i$.
\end{proof}

\section{Weyl modules in families}
In this section, we prove a control theorem for the symmetric powers of the Galois representations attached to arithmetic points of Hida family ordinary cusp forms. To prove this result, we use purity of Galois representations attached to cusp forms.

Given a normalized eigen cusp form $f=\sum_{n=1}^\infty a_n q^n$ of weight at least two, from the works of Eichler \cite{Eichler54}, Shimura \cite{ShimuraCorrespondances}, Deligne \cite{DeligneModFormAndlAdicRepr} and Ribet \cite[Theorem 2.3]{RibetGalReprAttachedToEigenformsWithNebentypus}, it follows that there exists a continuous Galois representation $\rho_f:G_\bbQ \to \gln_2(\bbQpbar)$ (unique up to equivalence) such that the trace of $\rho_f(\Fr_\ell)$ is equal to $a_\ell$ for any prime $\ell$ not dividing the product of $p$ and the level of $f$. 

Let $p$ be an odd prime, $N$ be a positive integer such that $Np\geq 4$ and $p\nmid N$. Choose a minimal prime ideal $\fraka$ of the universal $p$-ordinary Hecke algebra $h^\ord$ of tame level $N$ (denoted $h^\ord(N; \bbZp)$ in \cite{HidaICM86}). The ring $h^\ord$ is an algebra over $\bbZp[[X]]$. An {\it arithmetic specialization} of $h^\ord$ is a $\bbZp$-algebra map $\lambda:h^\ord \to \bbQpbar$ such that $\lambda((1+X)^{p^r} - (1+p)^{(k-2)p^r})=0$ for some integers $k\geq 2$ and $r\geq 0$. Denote the quotient ring $h^\ord/\fraka$ by $R(\fraka)$, its fraction field by $\calQ(\fraka)$ and fix an algebraic closure $\calQbar(\fraka)$ of $\calQ(\fraka)$. By \cite[Theorem 3.1]{HidaICM86}, there exists a unique (up to equivalence) continuous (in the sense of \cite[\S 3]{HidaICM86}) absolutely irreducible two-dimensional Galois representation $\rho_\fraka$ of $G_\bbQ$ over $\calQbar(\fraka)$ with traces in $R(\fraka)$ and satisfying $\tr(\rho_\fraka(\Fr_\ell)) = T_\ell\modu \fraka$ for all prime $\ell$ not dividing $Np$ where $T_\ell\in h^\ord$ denotes the Hecke operator associated to the prime $\ell$. By the isomorphism of \cite[Theorem 2.2]{HidaICM86}, there is a one-to-one correspondence between the arithmetic specializations of $h^\ord$ and the $p$-ordinary $p$-stabilized (in the sense of \cite[p.~538]{WilesOrdinaryLambdaAdic}) normalized eigen cusp forms of weight at least $2$ and tame level a divisor of $N$. Moreover, the trace of $\rho_{f_\lambda}$ is equal to $\lambda \circ \tr \rho_\fraka$ for any arithmetic specialization $\lambda$ of $h^\ord$ with $\lambda(\fraka)=0$ where $f_\lambda$ denotes the ordinary form associated to $\lambda$. 

\begin{theorem}
\label{Thm: Application: Hida family}
Let $\ell\neq p$ be a prime and $W_\ell$ denote the Weil group of $\bbQl$. Then there exist positive integers $m, t_1, \cdots , t_m$ and irreducible Frobenius-semisimple representations $r_1, \cdots, r_m$ of $W_\ell$ with coefficients in $R(\fraka)^\intal[1/p]$ such that $\WD(\Sym^d \rho_\fraka |_{W_\ell} )^\Frss$ is isomorphic to $\oplus_{i=1}^m \Sp_{t_i} (r_i)$ and for any arithmetic specialization $\lambda$ of $R(\fraka)$, the representations $\WD(\Sym^d \rho_\lambda|_{W_\ell})^\Frss$ and $\oplus_{i=1}^m \Sp_{t_i} (\lambda^\intal \circ r_i)$ are isomorphic.
\end{theorem}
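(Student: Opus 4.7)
The plan is to invoke Theorem \ref{Thm: Weyl sum} with $\mu = (d)$ (so that $\bbS_\mu = \Sym^d$), $F = \bbQ$, $w = \ell$, $n = 1$, $T_1 = T := \tr \rho_\fraka$, and $\sigma_1 := \rho_\fraka$. Since the definition of $\Sym^d$ requires $n_\mu = d!$ to be invertible, I would take the base ring to be $\scrO := R(\fraka)[1/p]$, which is a $\bbZp$-algebra whose integral closure equals $R(\fraka)^\intal[1/p]$. By Hida's theorem, $T$ takes values in $R(\fraka) \subseteq \scrO$, and $\rho_\fraka$ realizes $T$ as an absolutely irreducible representation over $\overline Q(\scrO) = \calQbar(\fraka)$.

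The main preparatory step is to exhibit, for each arithmetic specialization $\lambda$ of $R(\fraka)$, an element of the irreducibility and purity locus of $T$ whose residual representation corresponds to $\rho_{f_\lambda}$. Let $\fraka_\lambda := \ker(\lambda) \subseteq \scrO$ and let $\calO_\lambda$ be the Henselization of $\scrO_{\fraka_\lambda}$; this is a Henselian local domain and a $\bbZp$-algebra, with residue field $\kappa_\lambda$ a finite extension of $\bbQp$ (because $\scrO$ is finite over a localization of $\bbZp[[X]]$). The natural inclusion $\loc_\lambda : \scrO \hookrightarrow \calO_\lambda$, together with the classical representation $\rho_{f_\lambda}$ viewed over $\overline{\kappa_\lambda} \subseteq \bbQpbar$, provides the required tuple: the trace compatibility $\tr \rho_{f_\lambda} = \loc_\lambda \circ T \modu \frakm_\lambda$ is built into Hida's theorem, absolute irreducibility of $\rho_{f_\lambda}$ is due to Ribet, and purity of $\rho_{f_\lambda}|_{W_\ell}$ at $\ell \neq p$ is the classical purity result for Galois representations attached to cuspidal eigenforms of weight at least two. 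Moreover, potential unipotence of $\rho_\fraka|_{W_\ell}$, the hypothesis of Theorem \ref{Thm: Weyl sum}, follows from the corresponding property of each classical specialization together with the interpolation and continuity properties of the Hida family.

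Applying Theorem \ref{Thm: Weyl sum} then yields positive integers $m, t_1, \ldots, t_m$ and irreducible Frobenius-semisimple representations $r_1, \ldots, r_m$ of $W_\ell$ with coefficients in $\scrO^\intal = R(\fraka)^\intal[1/p]$ satisfying \eqref{Eqn: purity sum sigma} for $\rho_\fraka$, together with the isomorphisms \eqref{Eqn: purity sum rho prime} at each arithmetic specialization $\lambda$; in the latter, the composite $\pi_{\frakm_\lambda}^\intal \circ \loc_\lambda^\intal$ is identified with an extension of $\lambda$ to integral closures, yielding the asserted $\lambda^\intal$. The most delicate point I expect is the verification of potential unipotence of $\rho_\fraka|_{W_\ell}$: because $\rho_\fraka$ has coefficients in $\calQbar(\fraka)$, which is not an $\ell$-adic field, Grothendieck's monodromy theorem does not apply directly, and one must transfer the property from the Zariski-dense set of classical arithmetic specializations, along the lines of the analogous argument in \cite{BigPuritySub}.
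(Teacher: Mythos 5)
Your route is genuinely different from the paper's. The paper does not go through theorem \ref{Thm: Weyl sum} at all: it observes that, by Grothendieck's monodromy theorem, $\rho_\fraka|_{W_\ell}$ is potentially unipotent, so $\WD(\rho_\fraka|_{W_\ell})$ is a Weil--Deligne representation with coefficients in the domain $R(\fraka)[1/p]$ whose $\lambda$-specialization is $\WD(\rho_{f_\lambda})$, pure by Carayol; theorem \ref{Thm: Weyl big} then applies directly with $\scrO=R(\fraka)[1/p]$ and $f=\lambda$. This avoids the pseudorepresentation machinery, the auxiliary choice of $\sigma_i$, and the construction of elements of the irreducibility and purity locus altogether. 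Your detour through theorem \ref{Thm: Weyl sum} is workable in outline (with $n=1$ and $\sigma_1=\rho_\fraka$, the conclusions \eqref{Eqn: purity sum sigma} and \eqref{Eqn: purity sum rho prime} do specialize to the two asserted isomorphisms), but it buys nothing here, since $\rho_\fraka$ is already realized as a free rank-two representation with traces in $R(\fraka)$, which is exactly the situation theorem \ref{Thm: Weyl big} is designed for.

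Two points in your argument are genuine gaps. First, the Henselization of $\scrO_{\fraka_\lambda}$ need not be a domain (a Noetherian local domain can fail to be unibranch), so the tuple you write down need not belong to the irreducibility and purity locus as defined; one must pass to the quotient by a suitable minimal prime (chosen so that $\scrO$ still injects) or first normalize. Second, and more seriously, the potential unipotence of $\rho_\fraka|_{W_\ell}$ --- which you correctly single out as the delicate point, and which is needed on either route --- is left unproved. The transfer from classical specializations that you sketch is itself nontrivial: density of arithmetic points gives unipotence of $\rho_\fraka(\sigma)$ for each individual $\sigma$ once a candidate open subgroup is fixed, but producing one open subgroup of $I_\ell$ that works uniformly requires an argument you do not supply. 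The paper's resolution is simpler and direct: continuity of $\rho_\fraka$ in Hida's sense means it is realized on a lattice over (a finite extension of) the profinite ring $R(\fraka)$, so the image of $G_{\bbQl}$ is compact and the standard proof of Grothendieck's monodromy theorem for $\ell\neq p$ applies verbatim; no transfer from specializations is needed.
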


\begin{proof}
Since the representation $\rho_\fraka$ is continuous and $\ell\neq p$, by Grothendieck's monodromy theorem (see \cite[p.\,515--516]{SerreTate}), the action of the inertia subgroup $I_\ell$ on $\rho_\fraka$ is potentially unipotent. So the Weil-Deligne parametrization $\WD(\rho_\fraka|_{W_\ell})$ of $\rho_\fraka|_{W_\ell}$ is defined and has coefficients in $R(\fraka)[1/p]$. Its $\lambda$-specialization is isomorphic to $\WD(\rho_{f_\lambda})$, which is pure by \cite{Carayol86RepresentationAssocieHiblModForm}. Then the result follows from theorem \ref{Thm: Weyl big}.
\end{proof}

\subsection*{Acknowledgements}
It is my pleasure to thank Olivier Fouquet for his support and advice during the preparation of this article. I acknowledge the financial support from the ANR Projet Blanc ANR-10-BLAN 0114, the Mathematisches Forschungsinstitut Oberwolfach and the Max Planck Institute for Mathematics. I thank Santosh Nadimpalli for many useful discussions. 

\def\cprime{$'$} \def\polhk#1{\setbox0=\hbox{#1}{\ooalign{\hidewidth
  \lower1.5ex\hbox{`}\hidewidth\crcr\unhbox0}}}

\end{document}